\documentclass[10pt]{article}
\usepackage{amssymb}
\usepackage{amsmath}
\usepackage{amsfonts}
\usepackage{amsthm}
\usepackage{graphics,graphicx}
\usepackage{float}
\usepackage{mathtools}
\usepackage[dvipsnames]{xcolor}
\usepackage{url}

\usepackage[english]{babel}
\usepackage{authblk}

\newtheorem{theorem}{Theorem}
\newtheorem{lemma}{Lemma}
\newtheorem{proposition}{Proposition}
\newtheorem{corollary}{Corollary}

\title{An upper bound on the number of relevant variables in a bounded degree Boolean function on the Hamming graph}
\author[1]{Alexandr Valyuzhenich\thanks{Sobolev Institute of Mathematics, Ak. Koptyug av. 4, Novosibirsk 630090, Russia; Email address: graphkiper@mail.ru}}

\date{}
\begin{document}
\maketitle

\begin{abstract}
In this work, we prove that any Boolean function of degree $d$ on $\mathbb{Z}_{q}^n$, $q\geq 3$, has at most $m_qq^d$ relevant variables,
where $m_q=\frac{2q(q^2+4q+1)}{(q-1)^4}$.
For $q\in \{3,4,5,6,7\}$, we improve this bound to $2.854\cdot 3^d$, $1.749\cdot 4^d$, $1.263\cdot 5^d$, $0.994\cdot 6^d$, and $0.814\cdot 7^d$, respectively.
\end{abstract}

\section{Introduction}
A function $f:\mathbb{Z}_{q}^n\rightarrow \{0,1\}$ is called {\em Boolean}.
Given a function $f$ on $\mathbb{Z}_{q}^n$, a variable $x_i$, $1\leq i\leq n$, is called {\em relevant} (or {\em essential})
if there are two tuples $x,y\in \mathbb{Z}_{q}^n$ such that $x$ and $y$ differ only in position $i$ and $f(x)\neq f(y)$.
Let $R_{d,q}$ be the maximum number of relevant variables that a Boolean function of degree $d$ on $\mathbb{Z}_q^n$ can have,
and let $C_{d,q}=R_{d,q}q^{-d}$.

In 1994, Nisan and Szegedy \cite{NS94} proved that $R_{d,2}\leq d\cdot 2^{d-1}$.
In 2020, Chiarelli, Hatami and Saks \cite{CHS20} improved this bound to $6.614\cdot 2^{d}$.
In 2022, Wellens \cite{W22} further improved it to $4.394\cdot 2^{d}$.
In \cite{FI19J}, Filmus and Ihringer proved an analogue of the Nisan–Szegedy theorem for Boolean functions of degree $d$ on the Johnson graph.

In this work, we consider Boolean functions of a bounded degree on $\mathbb{Z}_{q}^n$ for $q\geq 3$.
It is known that any Boolean function of degree $1$ on $\mathbb{Z}_{q}^n$ has at most one relevant variable
(see, for example, \cite{FI19}).
The remark of Filmus and Ihringer at the end of \cite{FI19J} and Wellens's result imply that $R_{d,q}\leq 4.394\cdot 2^{\lceil \log_2 q\rceil d}$.
For $q\neq 2^s$, this bound was recently improved by the author \cite{V26} to $\frac{dq^{d+1}}{4(q-1)}$.
Another similar bound was recently established by Potapov in \cite{P25} (it depends not only on $d$ and $q$).

In this work, we prove a $q$-ary analogue of the result of Chiarelli, Hatami and Saks for all $q\geq 3$.
Namely, we prove that $R_{d,q}\leq m_qq^d$, where $m_q=\frac{2q(q^2+4q+1)}{(q-1)^4}$, for all $q\geq 3$.
For $q\in \{3,4,5,6,7\}$, we improve the bound $R_{d,q}\leq m_qq^d$ as follows:

\begin{enumerate}
  \item $R_{d,3}\leq 2.854\cdot 3^d$.
  
  \item $R_{d,4}\leq 1.749\cdot 4^d$.
  
  \item $R_{d,5}\leq 1.263\cdot 5^d$.
  
  \item $R_{d,6}\leq 0.994\cdot 6^d$.
  
  \item $R_{d,7}\leq 0.814\cdot 7^d$.
\end{enumerate}

We also note that there is a trivial lower bound $R_{d,q}\geq \frac{q^d-1}{q-1}$.
Indeed, using a $q$-ary generalization of binary decision trees, 
it is easy to construct a Boolean function of degree $d$ with $\sum_{i=0}^{d-1}q^i$ relevant variables.
Therefore, we have $R_{d,q}\geq \frac{q^d-1}{q-1}$ (see also Proposition \ref{P:vozrastaet}).

The paper is organized as follows.
In Section \ref{Sec:Def}, we introduce basic definitions.
In Section \ref{Sec:h_d<}, we prove that $h_{d,q}\leq 2d^3$.
In Section \ref{Sec:C_d=W_d}, we prove that $C_{d,q}=W_{d,q}$.
In Section \ref{Sec:C_d<}, we prove that $C_{d,q}\leq C_{d-1,q}+h_{d,q}q^{-d}$.
In Section \ref{Sec:Main}, we prove that $R_{d,q}\leq m_qq^d$, where $m_q=\frac{2q(q^2+4q+1)}{(q-1)^4}$, for all $q\geq 3$.
For $q\in \{3,4,5,6,7\}$, we improve this bound to $2.854\cdot 3^d$, $1.749\cdot 4^d$, $1.263\cdot 5^d$, $0.994\cdot 6^d$, and $0.814\cdot 7^d$, respectively.

\section{Basic definitions}\label{Sec:Def}

For a positive integer $n$, denote $[n]=\{1,\ldots,n\}$.
The {\em support} of a tuple $u \in \mathbb{Z}_{q}^n$ is defined as $\mathrm{supp}(u)=\{i\in [n]:u_i \neq 0\}$.
The {\em weight} of a tuple $u\in \mathbb{Z}_{q}^n$, denoted by $|u|$, is the number of its non-zero coordinates.
The set of all real-valued (Boolean) functions on $\mathbb{Z}_{q}^n$ is denoted by $\mathcal{R}(n,q)$ ($\mathcal{B}(n,q)$).

Now we discuss three equivalent definitions of degree for real-valued functions on $\mathbb{Z}_{q}^n$.

\textbf{Definition 1.}
For every $u\in \mathbb{Z}_{q}^n$, we define a function $\chi_u$ on $\mathbb{Z}_{q}^n$ by $\chi_{u}(x)=\omega^{\langle u,x\rangle}$, where 
$\omega=e^{2pi/q}$ is a primitive $q$th root of unity and
$\langle u,x\rangle=u_1x_1+\cdots+u_nx_n$.
Recall that a character of an Abelian group $G$ is a homomorphism from $G$ to $\mathbb{C}^*$.
It is easy to verify that the functions $\chi_u$, where $u\in \mathbb{Z}_{q}^n$, are characters of the Abelian group $\mathbb{Z}_{q}^n$.
It is well known that the characters of any Abelian group form an orthogonal basis for the vector space of all complex-valued functions defined on this group.
Therefore, every function $f\in \mathcal{R}(n,q)$ is uniquely expressible as
$$f=\sum_{u\in \mathbb{Z}_{q}^n}\widehat{f}(u)\chi_u,$$
where $\widehat{f}(u)\in \mathbb{C}$ for all $u\in \mathbb{Z}_{q}^n$.
The complex numbers $\widehat{f}(u)$, where $u\in \mathbb{Z}_{q}^n$, are called the {\em Fourier coefficients} of $f$.
The {\em degree} of a function $f\in \mathcal{R}(n,q)$, denoted by $\mathrm{deg}(f)$, is $\max\{|u|:u\in \mathbb{Z}_q^n~\mathrm{and}~\widehat{f}(u)\neq 0\}$.

\textbf{Definition 2.}
A function $f\in \mathcal{R}(n,q)$ is called a {\em $k$-junta} if it has at most $k$ relevant variables.
The {\em degree} of a function $f\in \mathcal{R}(n,q)$ is the smallest non-negative integer $t$ such that
$f$ is a sum of $t$-juntas.

\textbf{Definition 3.}
For $u\in \mathbb{Z}_{q}^n$, let $x^u=x_1^{u_1}\cdots x_n^{u_n}$.
It is known that for any function $f\in \mathcal{R}(n,q)$ there exists a unique polynomial $P\in \mathbb{R}[x_1,\ldots,x_n]$
of degree at most $q-1$ in each variable such that $f(x)=P(x)$ for all  $x\in \mathbb{Z}_{q}^n$
(see, for example, \cite[Proposition 2.2]{AG-MK24} and \cite[Proposition 1]{P25}).
Therefore, any function $f\in \mathcal{R}(n,q)$ can be uniquely represented as
$$f=\sum_{u\in \mathbb{Z}_q^n}\alpha_u x^u,$$ where $\alpha_u$ are real numbers.
The {\em degree} of a function $f\in \mathcal{R}(n,q)$ is $\max\{|u|:u\in \mathbb{Z}_q^n~\mathrm{and}~\alpha_u\neq 0\}$.

Let us show that these three definitions are equivalent.
The equivalence of the first and second definitions follows directly from the following two facts:

\begin{itemize}
  
  \item For every $u\in \mathbb{Z}_{q}^n$, $\chi_u$ is a $|u|$-junta. 
  
  \item Any $k$-junta is a linear combination of characters $\chi_u$ such that $|u| \leq k$.
\end{itemize}

Similarly, we can prove that the second and third definitions are equivalent.
Thus, all three definitions are equivalent.
In the rest of the paper we will use the third definition.

Let $\mathcal{B}(n,q,\leq d)$ denote the set of all Boolean functions of degree at most $d$ on $\mathbb{Z}_{q}^n$.

\textbf{Relevant variables.}
Let $f\in \mathcal{R}(n,q)$.
A variable $x_i$, $i\in [n]$, is called {\em relevant} for $f$
if there are two tuples $x,y\in \mathbb{Z}_{q}^n$ such that $x$ and $y$ differ only in position $i$ and $f(x)\neq f(y)$.
The number of relevant variables of $f$ is denoted by $R(f)$.
Let $R_{d,q}$ denote the maximum of $R(f)$ over all Boolean functions $f$ of degree at most $d$,
and let $C_{d,q}=R_{d,q}q^{-d}$.

\textbf{Maxonomial and hitting sets.}
Let $f\in \mathcal{B}(n,q)$. A set $S\subseteq [n]$ of size $\mathrm{deg}(f)$ is called {\em maxonomial} for $f$ 
if there exists $u\in \mathbb{Z}_{q}^n$ such that $\mathrm{supp}(u)=S$ and $\alpha_u\neq 0$.
A set $H \subseteq [n]$ is called {\em maxonomial hitting} for $f$ if $H$ intersects every maxonomial set of $f$.
Let $h(f)$ denote the minimum size of a maxonomial hitting set for $f$,
and let $h_{d,q}$ denote the maximum of $h(f)$ over all Boolean functions of degree $d$.

Let us consider $m$ disjoint subsets $B_1,\ldots,B_m$ of $[n]$ of sizes $k_1,\ldots,k_m$, respectively.
For each $s\in [m]$, order the elements of $B_s$ in increasing order: $B_s=\{b_{s,1},b_{s,2},\dots,b_{s,k_s}\}$, where $b_{s,1}<b_{s,2}<\dots<b_{s,k_s}$.
Let $v_1\in \mathbb{Z}_q^{k_1},v_2\in \mathbb{Z}_q^{k_2},\ldots,v_m\in \mathbb{Z}_q^{k_m}$.
For a tuple $x \in \mathbb{Z}_{q}^n$, we define the tuple $x^{(B_1,v_1),\ldots,(B_m,v_m)} \in \mathbb{Z}_{q}^n$ as follows:
$$
(x^{(B_1,v_1),\ldots,(B_m,v_m)})_i=
\begin{cases}
    x_i\oplus (v_s)_j, & \text{if } i=b_{s,j}\in B_s \text{ for some } s\in [m]; \\
    x_i, & \text{if } i \notin \bigcup_{s=1}^m B_s.
\end{cases}
,$$
where $\oplus$ denotes addition in $\mathbb{Z}_q$ and $(v_s)_j$ is the $j$-th entry of $v_s$.
For example, if $n=5$, $q=3$, $B_1=\{1,2\}$, $B_2=\{3,4\}$, $v_1=(1,1)$, $v_2=(2,2)$ and $x=(0,1,0,2,0)$, 
then $$x^{(B_1,v_1),(B_2,v_2)}=(1,2,2,1,0).$$

\textbf{Block sensitivity.}
Let $f\in \mathcal{B}(n,q)$.
For a tuple $x\in \mathbb{Z}_{q}^n$, the {\em local block sensitivity} of $f$ at $x$, denoted by $\mathrm{bs}(f,x)$, is the maximum number $t$ for which there exist $t$ disjoint subsets $B_1,\ldots,B_t$ of $[n]$ and $t$ tuples $v_1\in [q-1]^{|B_1|},\ldots,v_t\in [q-1]^{|B_t|}$ such that $f(x)\neq f(x^{(B_i,v_i)})$ for all $i\in [t]$.
The {\em block sensitivity} of $f$ is defined as $$\mathrm{bs}(f)=\max_{x \in \mathbb{Z}_{q}^n} \mathrm{bs}(f,x).$$

\textbf{Width of monomials.}
The {\em width} of a monomial $\alpha x^u$, where $\alpha\in \mathbb{R}$ and $u\in \mathbb{Z}_{q}^n$, is defined as $\vert{}u\vert{}$.
For example, the width of the monomial $5x_1x_2^2x_3^3$ is $3$.

\textbf{Weight of variables and functions.}
Let $f\in \mathcal{R}(n,q)$.
For a variable $x_i$, $i\in [n]$, let $\mathrm{deg}_i(f)=\max\{|u|:u\in \mathbb{Z}_q^n,i\in\mathrm{supp}(u)~\mathrm{and}~\alpha_u\neq 0\}$.
Let $w_i(f)=0$ if $x_i$ is not a relevant variable of $f$, and let $w_i(f)=q^{-\mathrm{deg}_i(f)}$ otherwise.
The {\em weight} of $f$, denoted by $W(f)$, is $\sum_{i\in [n]}w_i(f)$.
Let $W_{d,q}$ denote the maximum of $W(f)$ over all Boolean functions $f$ of degree at most $d$.
Note that $W_{d,q}$ is well-defined. 
Indeed, by \cite[Corollary 2]{V26}, we have $R_{d,q}\leq \frac{dq^{d+1}}{4(q-1)}$. 
Therefore, this maximum is taken over a finite set of functions.
A function $f\in \mathcal{B}(n,q,\leq d)$ for which $W(f)=W_{d,q}$ is called {\em $W_{d,q}$-maximizing}.

\section{Upper bound for $h_{d,q}$}\label{Sec:h_d<}

In this section, we prove that $h(f)\leq 2\mathrm{deg}(f)^3$.
As an immediate consequence, we obtain that $h_{d,q}\leq 2d^3$.

\begin{lemma}\label{L:m-block}
Let $f\in \mathcal{B}(n,q)$ and let $\mathrm{deg}(f)\geq 1$.
For any maxonomial set $M$ of $f$, there exist a set $B\subseteq M$ and a tuple $v\in [q-1]^{|B|}$ such that $f(\bar{0})\neq f(\bar{0}^{(B,v)})$.
\end{lemma}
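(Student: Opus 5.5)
Suppose for contradiction that for every $B\subseteq M$ and every $v\in[q-1]^{|B|}$ we have $f(\bar 0^{(B,v)})=f(\bar 0)$. Equivalently, $f$ is constant, equal to $c=f(\bar 0)$, on the whole subcube
$$Q=\{x\in\mathbb{Z}_q^n : x_i=0 \text{ for all } i\notin M\},$$
since every point of $Q$ is $\bar 0^{(B,v)}$ with $B$ its support and $v$ its nonzero entries. The plan is to derive from this that the coefficient of some monomial with support exactly $M$ vanishes. That contradicts the assumption that $M$ is maxonomial.

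First, restrict the polynomial representation. Write $f=\sum_u \alpha_u x^u$ (Definition 3) and set $x_i=0$ for $i\notin M$. Every monomial $x^u$ with $\mathrm{supp}(u)\not\subseteq M$ vanishes on $Q$. The remaining terms give a polynomial $g(x_M)=\sum_{\mathrm{supp}(u)\subseteq M}\alpha_u x^u$ in the variables indexed by $M$, of degree at most $q-1$ in each variable. It agrees with $f$ on $Q\cong\mathbb{Z}_q^{|M|}$.

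Second, compare with the constant. By hypothesis $g$ equals the constant function $c$ on all of $\mathbb{Z}_q^{|M|}$. The representation in Definition 3 is unique (the result cited from \cite{AG-MK24,P25}, applied with $n$ replaced by $|M|$). Hence $g$ and the constant polynomial $c$ have the same coefficients, so $\alpha_u=0$ for every $u\ne\bar 0$ with $\mathrm{supp}(u)\subseteq M$.

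Third, reach the contradiction. Since $M$ is maxonomial, some $u$ with $\mathrm{supp}(u)=M$ has $\alpha_u\neq 0$. Because $|M|=\deg(f)\ge 1$, this $u$ is nonzero, which contradicts the second step. So some pair $(B,v)$ with $B\subseteq M$ gives $f(\bar 0^{(B,v)})\neq f(\bar 0)$, as claimed.

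There is no real obstacle here. The only point needing care is that restricting to $Q$ kills exactly the monomials whose support leaves $M$, so that uniqueness can be applied to the restricted polynomial on $\mathbb{Z}_q^{|M|}$.
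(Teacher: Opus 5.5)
Your proof is correct and follows the paper's argument: both restrict $f$ to the subcube where all coordinates outside $M$ are zero, note that the restriction is nonconstant because $x^u$ survives with coefficient $\alpha_u\neq 0$, and read off $B$ and $v$ from a point where the restriction differs from $f(\bar{0})$. The only differences are that you phrase it by contradiction and you state explicitly the uniqueness of the Definition 3 representation, which the paper uses implicitly when it asserts that the restriction is not constant.
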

\begin{proof}
Since $M$ is a maxonomial set for $f$, there exists a tuple  $u\in \mathbb{Z}_{q}^n$ such that
$\mathrm{supp}(u)=M$ and $\alpha_u\neq 0$.
Let $g$ be the restriction of $f$ to the set $\{x\in \mathbb{Z}_q^n:x_i=0 \text{ for each } i\in [n]-M\}$.
Since $x^u$ appears in $g$ with non-zero coefficient $\alpha_u$, $g$ is not a constant.
Therefore, there exists a tuple $w$ such that $g(w)\neq g(\bar{0})$.
This implies that there exists a tuple $\widetilde{w}\in \mathbb{Z}_{q}^n$ such that $\mathrm{supp}(\widetilde{w})\subseteq M$ and $f(\widetilde{w})\neq f(\bar{0})$.
Then $f(\bar{0}^{(B,v)})\neq f(\bar{0})$,
where $B=\mathrm{supp}(\widetilde{w})$ and $v$ is the tuple obtained from $\widetilde{w}$ by deleting coordinates $x_i$ such that $i\not\in B$. 
\end{proof}

\begin{lemma}\label{L:h(f)<deg(f)bs(f)}
For any function $f\in \mathcal{B}(n,q)$, we have $h(f)\leq \mathrm{deg}(f) \mathrm{bs}(f)$.
\end{lemma}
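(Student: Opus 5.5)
We follow the classical Nisan--Szegedy argument via a greedy choice of disjoint maxonomial sets. Let $d=\mathrm{deg}(f)$. If $d=0$ there are no maxonomial sets of positive size (the only maxonomial set is $\emptyset$), and the statement is trivial or vacuous; so assume $d\geq 1$. Greedily pick maxonomial sets $M_1,M_2,\ldots,M_t$ that are pairwise disjoint: choose $M_1$ arbitrarily, and at each step choose $M_{j+1}$ disjoint from $M_1\cup\cdots\cup M_j$, as long as one exists. When the process stops, every maxonomial set of $f$ meets $H=M_1\cup\cdots\cup M_t$. Hence $H$ is a maxonomial hitting set of size $td$, and $h(f)\leq td$. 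It therefore suffices to show $t\leq \mathrm{bs}(f)$.

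To bound $t$, we apply Lemma \ref{L:m-block} at the origin to each $M_j$. This gives a set $B_j\subseteq M_j$ and a tuple $v_j\in[q-1]^{|B_j|}$ with $f(\bar 0)\neq f(\bar 0^{(B_j,v_j)})$. The sets $B_1,\ldots,B_t$ are pairwise disjoint because the $M_j$ are. So these $t$ blocks witness $\mathrm{bs}(f,\bar 0)\geq t$, and therefore $t\leq \mathrm{bs}(f)$.

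One point needs checking. Lemma \ref{L:m-block} is stated only at $\bar 0$, and the argument above uses it only at $\bar 0$, so no shift of the function is required. We must also make sure the $B_j$ are nonempty. This holds because $f(\bar 0^{(B_j,v_j)})\neq f(\bar 0)$ forces $\bar 0^{(B_j,v_j)}\neq \bar 0$. Combining the two bounds gives $h(f)\leq d\,t\leq \mathrm{deg}(f)\,\mathrm{bs}(f)$. No step seems to be a real obstacle; the only care needed is in verifying that the greedy union is indeed hitting, which follows from maximality of the greedy family.
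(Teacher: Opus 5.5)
Your proposal is correct and follows the paper's proof step for step. Both greedily pick pairwise disjoint maxonomial sets whose union is a hitting set of size $k\,\mathrm{deg}(f)$, then apply Lemma~\ref{L:m-block} at $\bar 0$ to each set to obtain $k$ disjoint sensitive blocks, which gives $k\le \mathrm{bs}(f)$. Your degree-$0$ remark is slightly loose: for $f\equiv 1$ the empty set is maxonomial, so no hitting set exists at all. The paper, however, also implicitly assumes $\mathrm{deg}(f)\ge 1$ here, as Lemma~\ref{L:m-block} requires.
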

\begin{proof}
We will construct a maxonomial hitting set using the following algorithm.
In the first step of the algorithm, we choose an arbitrary maxonomial set $M_1$ of $f$.
For $j \ge 2$, in the $j$-th step of the algorithm, we choose a maxonomial set $M_j$ of $f$ such that $M_j \cap M_i = \emptyset$ for all $i \in [j-1]$.
If we cannot choose such a set $M_j$, then the algorithm terminates.
Assume that this algorithm terminates after $k$ steps, that is, we have chosen $k$ maxonomial sets $M_1, \ldots, M_k$.
Let $H=M_1\cup \ldots \cup M_k$.
Note that $H$ intersects any maxonomial set of $f$.
Therefore, $H$ is a maxonomial hitting set for $f$.
Let us show that $|H|\leq \mathrm{deg}(f)\mathrm{bs}(f)$.
By Lemma \ref{L:m-block}, for every $i\in [k]$ there exist a set $B_i\subseteq M_i$ and a tuple $v_i\in [q-1]^{|B_i|}$ such that $f(\bar{0})\neq f(\bar{0}^{(B_i,v_i)})$.
Therefore, we have $\mathrm{bs}(f)\geq k$.
Since $M_i$ is a maxonomial set of $f$ for each $i\in [k]$, we have $|M_i|=\mathrm{deg}(f)$ for all $i\in [k]$.
Therefore, we have $$|H|=|M_1|+\ldots +|M_k|=k\cdot \mathrm{deg}(f)\leq \mathrm{bs}(f)\mathrm{deg}(f).$$
\end{proof}

For $q=2$, Lemmas \ref{L:m-block} and \ref{L:h(f)<deg(f)bs(f)} were proved by Nisan and Smolensky (see, for example, Lemmas 5 and 6 in \cite{BW02}).


It remains to prove that $\mathrm{bs}(f)\leq 2\mathrm{deg}(f)^2$.
For Boolean functions on $\mathbb{Z}_{2}^n$, this bound was proved by Nisan and Szegedy in \cite {NS94}.
In this work, we show that the same result holds for Boolean functions on $\mathbb{Z}_{q}^n$ for all $q\geq 3$.
We need the following useful fact.
\begin{lemma}[\cite{NS94}, Lemma 3.5]\label{L:deg(f)>n/2}
Let $f\in \mathcal{B}(n,2)$.
If $f(x)\neq f(\overline{0})$ for any vector $x\in \mathbb{Z}_{2}^n$ of weight $1$,
then $\mathrm{deg}(f)\geq \sqrt{n/2}$.
\end{lemma}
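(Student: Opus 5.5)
We follow Nisan and Szegedy: reduce to a univariate polynomial by symmetrization, then lower-bound its degree with a Markov-type inequality. If $n=0$ there is nothing to prove. Replacing $f$ by $1-f$ if necessary (this does not change the degree, since $n\geq 1$ forces $f$ to be nonconstant and so $\mathrm{deg}(f)\geq 1$), we may assume $f(\overline{0})=0$. The hypothesis then says $f(x)=1$ for every $x$ of weight $1$. Let $d=\mathrm{deg}(f)$ and write $f$ as its multilinear polynomial of degree $d$.

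\textbf{Step 1 (symmetrization).} Define $p(k)$ as the average of $f$ over all vectors of weight $k$, for $k\in\{0,\ldots,n\}$. By the Minsky--Papert argument, averaging a multilinear monomial $x_{i_1}\cdots x_{i_s}$ over permutations of the coordinates gives $\binom{k}{s}/\binom{n}{s}$, which is a polynomial in $k$ of degree $s$. Hence $p$ extends to a real polynomial of degree at most $d$. Moreover $p(0)=0$, $p(1)=1$, and $0\leq p(k)\leq 1$ for every integer $k\in[0,n]$, because $f$ is Boolean.

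\textbf{Step 2 (a large derivative).} By the mean value theorem there is $\xi\in(0,1)$ with $p'(\xi)=p(1)-p(0)=1$.

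\textbf{Step 3 (Ehlich--Zeller / Rivlin--Cheney bound).} The key claim is the following. If a real polynomial $p$ satisfies $0\leq p(k)\leq 1$ at all integers $k\in[0,n]$, and $|p'(\xi)|\geq c$ for some $\xi\in[0,n]$, then $\mathrm{deg}(p)\geq\sqrt{cn/(c+1)}$. With $c=1$ this gives $d\geq\sqrt{n/2}$, as required.

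To prove the claim, let $\lambda=\max_{x\in[0,n]}|p'(x)|$, so $\lambda\geq c$. Every real $x\in[0,n]$ lies within distance $1/2$ of some integer. Therefore $p(x)$ lies in $[-\lambda/2,\,1+\lambda/2]$ on the whole interval, so $\sup_{[0,n]}|p-1/2|\leq(1+\lambda)/2$. Markov's inequality on an interval of length $n$ gives $\lambda\leq \frac{2d^2}{n}\cdot\frac{1+\lambda}{2}=\frac{d^2(1+\lambda)}{n}$. Rearranging, $d^2\geq n\lambda/(1+\lambda)\geq nc/(1+c)$, since $t\mapsto t/(1+t)$ is increasing.

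The main obstacle is Step 3. It is the only place where real analysis, namely Markov's inequality, enters, and one must handle correctly the fact that $p$ is controlled only at integer points. The trick of bounding the deviation between integers by $\lambda/2$, and then solving the resulting inequality for $\lambda$, is what makes the argument close. Alternatively, one may simply cite this as the Ehlich--Zeller and Rivlin--Cheney lemma, exactly as Nisan and Szegedy do.
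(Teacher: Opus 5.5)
Your proof is correct: symmetrization gives a polynomial $p$ of degree at most $d$ with $p(0)=0$, $p(1)=1$ and $0\le p(k)\le 1$ on $\{0,\ldots,n\}$. Markov's inequality, with the deviation between integer points bounded by $\lambda/2$, then yields $d^2\ge n\lambda/(1+\lambda)\ge n/2$. This is essentially the Nisan--Szegedy argument (symmetrization plus the Ehlich--Zeller/Rivlin--Cheney bound with $c=1$ and values in $[0,1]$), which the paper simply cites without reproving.
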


\begin{lemma}\label{L:deg(f)>bs(f)/2}
For any function $f\in \mathcal{B}(n,q)$, we have $\mathrm{deg}(f)\geq \sqrt{\mathrm{bs}(f)/2}$.
\end{lemma}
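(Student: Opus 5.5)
Let $t=\mathrm{bs}(f)$ and fix a tuple $x\in\mathbb{Z}_q^n$, disjoint blocks $B_1,\ldots,B_t\subseteq[n]$ and tuples $v_i\in[q-1]^{|B_i|}$ with $f(x)\neq f(x^{(B_i,v_i)})$ for all $i\in[t]$. The plan is to restrict $f$ to a Boolean function on $\mathbb{Z}_2^t$ that satisfies the hypothesis of Lemma \ref{L:deg(f)>n/2}, and to show that this restriction does not increase degree. Then $\mathrm{deg}(f)\geq \mathrm{deg}(g)\geq\sqrt{t/2}$, which is the claim.

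Define $g:\mathbb{Z}_2^t\to\{0,1\}$ by $g(y)=f(x^{(B_1,y_1v_1),\ldots,(B_t,y_tv_t)})$. Here $y_iv_i$ means $v_i$ when $y_i=1$ and the zero tuple when $y_i=0$. Since the blocks are disjoint this is well defined. We have $g(\bar 0)=f(x)$, and for the unit vector $e_i$ we get $g(e_i)=f(x^{(B_i,v_i)})\neq g(\bar 0)$. Hence $g$ satisfies the hypothesis of Lemma \ref{L:deg(f)>n/2}, so $\mathrm{deg}(g)\geq\sqrt{t/2}$.

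It remains to show $\mathrm{deg}(g)\leq\mathrm{deg}(f)$; this is the only nontrivial step. I would use Definition 2. Write $f=\sum_j f_j$ with each $f_j$ a $d$-junta, where $d=\mathrm{deg}(f)$. Then $g=\sum_j g_j$, where $g_j(y)=f_j(x^{(B_1,y_1v_1),\ldots})$. Each coordinate of $x^{(\cdots)}$ with index in $B_i$ depends only on $y_i$, and the other coordinates are constant. So if $f_j$ depends only on variables in a set $S_j$ with $|S_j|\leq d$, then $g_j$ depends only on those $y_i$ with $B_i\cap S_j\neq\emptyset$. There are at most $|S_j|\leq d$ such $i$, because the blocks are disjoint. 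Thus each $g_j$ is a $d$-junta on $\mathbb{Z}_2^t$.

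Finally, any real function on $\mathbb{Z}_2^t$ depending on at most $d$ variables has multilinear degree at most $d$. Hence $\mathrm{deg}(g)\leq d$, using that the definitions of degree agree for $q=2$, where Lemma \ref{L:deg(f)>n/2} is stated. Combining, $\mathrm{deg}(f)\geq\mathrm{deg}(g)\geq\sqrt{\mathrm{bs}(f)/2}$.
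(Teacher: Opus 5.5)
Your proposal is correct and follows the paper's own proof: you take the same restriction $g(y)=f(z^{(B_1,y_1v_1),\ldots,(B_t,y_tv_t)})$ to $\mathbb{Z}_2^t$ and apply Lemma~\ref{L:deg(f)>n/2}. The one difference is that the paper simply asserts that $\mathrm{deg}(g)\leq\mathrm{deg}(f)$ is ``easy to check'', whereas you actually prove it, using the junta characterization of degree (Definition 2) together with disjointness of the blocks, and that argument is valid.
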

\begin{proof}
Let $t=\mathrm{bs}(f)$ and let $z\in \mathbb{Z}_{q}^n$ be a tuple such that $\mathrm{bs}(f,z)=t$.
By the definition of local block sensitivity, there exist $t$ disjoint subsets $B_1,\ldots,B_t$ of $[n]$ and $t$ tuples $v_1,\ldots,v_t$ 
such that $f(z)\neq f(z^{(B_i,v_i)})$ for all $i\in [t]$.
We define a function $g$ on $\mathbb{Z}_{2}^t$ as follows: 

$$g(y_1,\ldots,y_t)=f(z^{(B_1,y_1v_1),\ldots,(B_t,y_tv_t)}).$$

Note that $g$ is Boolean.
In addition, since $f(z)\neq f(z^{(B_i,v_i)})$ for all $i\in [t]$,
we have that $g(y)\neq g(\overline{0})$ for any vector $y\in \mathbb{Z}_{2}^t$ of weight $1$.
Thus, $g$ satisfies the conditions of Lemma \ref{L:deg(f)>n/2}.
Applying Lemma \ref{L:deg(f)>n/2} to $g$, we obtain that $\mathrm{deg}(g)\geq \sqrt{t/2}$.
On the other hand, it is easy to check that $\mathrm{deg}(g)\leq \mathrm{deg}(f)$.
Therefore, we have $\mathrm{deg}(f)\geq \sqrt{\mathrm{bs}(f)/2}$.
\end{proof}

Combining Lemmas \ref{L:h(f)<deg(f)bs(f)} and \ref{L:deg(f)>bs(f)/2}, we immediately obtain the following.

\begin{corollary}
For any function $f\in \mathcal{B}(n,q)$, we have $h(f)\leq 2\mathrm{deg}(f)^3$.
\end{corollary}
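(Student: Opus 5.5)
The corollary follows by chaining Lemma \ref{L:h(f)<deg(f)bs(f)} with Lemma \ref{L:deg(f)>bs(f)/2}. Let $f\in\mathcal{B}(n,q)$. Lemma \ref{L:h(f)<deg(f)bs(f)} gives $h(f)\leq \mathrm{deg}(f)\,\mathrm{bs}(f)$. Lemma \ref{L:deg(f)>bs(f)/2} gives $\mathrm{deg}(f)\geq\sqrt{\mathrm{bs}(f)/2}$; since both sides are nonnegative we may square to get $\mathrm{bs}(f)\leq 2\,\mathrm{deg}(f)^2$. Substituting this into the first inequality yields
$$h(f)\leq \mathrm{deg}(f)\cdot 2\,\mathrm{deg}(f)^2=2\,\mathrm{deg}(f)^3.$$

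The only step needing attention is the degenerate case $\mathrm{deg}(f)=0$, i.e.\ $f$ constant. Lemma \ref{L:m-block} assumes $\mathrm{deg}(f)\geq 1$, so I would check this case directly rather than through the chain. Here the only maxonomial set is $\emptyset$, coming from the constant monomial. If $f\equiv 1$, this set cannot be hit by any $H$, and the convention that matters is that $h(f)$ should be read as $0$ for constant functions (equivalently, one restricts attention to $\mathrm{deg}(f)\geq 1$, as in the later use $h_{d,q}\leq 2d^3$ for $d\geq 1$). If $f\equiv 0$, there are no maxonomial sets at all, so $H=\emptyset$ works and $h(f)=0$. Also $\mathrm{bs}(f)=0$ for constant $f$, so the inequality $h(f)\le 2\cdot 0^3$ holds under that convention.

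For $\mathrm{deg}(f)\geq 1$ the two lemmas apply verbatim, so there is no real obstacle beyond this bookkeeping. Taking the maximum over all Boolean $f$ of degree $d$ then gives $h_{d,q}\leq 2d^3$, as announced at the start of the section.
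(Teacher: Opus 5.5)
Your proof is correct and is exactly the paper's argument: chain $h(f)\le \mathrm{deg}(f)\,\mathrm{bs}(f)$ with $\mathrm{bs}(f)\le 2\,\mathrm{deg}(f)^2$, obtained by squaring the Nisan--Szegedy-type bound $\mathrm{deg}(f)\ge\sqrt{\mathrm{bs}(f)/2}$. Your remark on the constant case is a fair refinement that the paper glosses over: for $f\equiv 1$ the only maxonomial set is $\emptyset$, so $h(f)$ needs a convention, which is harmless since the bound is only used for $\mathrm{deg}(f)=d\ge 1$.
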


\begin{corollary}\label{Cor:h_d<}
For all $d\geq 1$, we have $h_{d,q}\leq 2d^3$.
\end{corollary}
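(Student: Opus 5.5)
This follows directly from the preceding corollary, which gives $h(f)\leq 2\mathrm{deg}(f)^3$ for every $f\in\mathcal{B}(n,q)$. By definition, $h_{d,q}$ is the maximum of $h(f)$ over all Boolean functions $f$ of degree $d$ on $\mathbb{Z}_q^n$. So it suffices to fix an arbitrary such $f$ and observe that $h(f)\leq 2d^3$.

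Two points need a remark. First, the maximum defining $h_{d,q}$ should be meaningful uniformly in $n$. Since the bound $2d^3$ does not depend on $n$, the supremum over all $n$ and all $f$ is bounded by $2d^3$, whether $h_{d,q}$ is read as a supremum or a maximum. Second, the earlier lemmas require $\mathrm{deg}(f)\geq 1$ at one point, namely Lemma \ref{L:m-block}, which is used inside Lemma \ref{L:h(f)<deg(f)bs(f)}. Since $d\geq 1$ is assumed in the statement, this hypothesis holds for every $f$ under consideration.

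The chain of reasoning is therefore the following. Lemma \ref{L:h(f)<deg(f)bs(f)} gives $h(f)\leq d\,\mathrm{bs}(f)$. Lemma \ref{L:deg(f)>bs(f)/2}, after squaring, gives $\mathrm{bs}(f)\leq 2d^2$. Combining the two yields $h(f)\leq 2d^3$, and taking the maximum over $f$ gives $h_{d,q}\leq 2d^3$.

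There is no real obstacle here. All the substantive work, the reduction to the binary Nisan--Szegedy lemma via the function $g$ on $\mathbb{Z}_2^t$, has already been done in Lemma \ref{L:deg(f)>bs(f)/2}. The only point deserving a sentence of justification is the claim $\mathrm{deg}(g)\leq\mathrm{deg}(f)$ used there, which I would take as established. It holds because each $t$-junta summand of $f$ composes to a function of at most $t$ of the $y_s$, so Definition 2 of degree transfers directly.
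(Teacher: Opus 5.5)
Your proposal is correct and follows the paper's own route: Lemma~\ref{L:h(f)<deg(f)bs(f)} gives $h(f)\leq d\,\mathrm{bs}(f)$, Lemma~\ref{L:deg(f)>bs(f)/2} gives $\mathrm{bs}(f)\leq 2d^2$, and taking the maximum over all Boolean $f$ of degree $d\geq 1$ yields $h_{d,q}\leq 2d^3$. Your justification of $\mathrm{deg}(g)\leq\mathrm{deg}(f)$ is also correct, and it fills in the step the paper calls easy: because the blocks $B_s$ are disjoint, each $t$-junta summand of $f$ becomes a $t$-junta in the variables $y_s$.
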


\section{$C_{d,q}=W_{d,q}$}\label{Sec:C_d=W_d}
In this section, we prove that $C_{d,q}=W_{d,q}$.

\begin{lemma}\label{L:W_d=C_d}
Let $f\in \mathcal{B}(n,q,\leq d)$. If $f$ is $W_d$-maximizing, 
then every relevant variable of $f$ belongs to a monomial of width $d$.
\end{lemma}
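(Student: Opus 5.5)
We argue by contradiction. Suppose $f\in \mathcal{B}(n,q,\leq d)$ is $W_{d,q}$-maximizing and has a relevant variable $x_i$ with $\mathrm{deg}_i(f)<d$. From $f$ we will build a Boolean function $F$ of degree at most $d$ with $W(F)>W(f)$.

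\emph{Choosing the set to modify.} Every relevant variable occurs in some monomial with non-zero coefficient, so $\mathrm{deg}_j(f)\geq 1$ for each relevant $x_j$. Let $k$ be the \emph{minimum} of $\mathrm{deg}_j(f)$ over all relevant variables $x_j$. By assumption $k<d$. Let $S$ be the set of relevant variables $j$ with $\mathrm{deg}_j(f)=k$.

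\emph{The construction.} Introduce a new variable $y$ and, for each $j\in S$, $q$ fresh copies $x_j^{(0)},\ldots,x_j^{(q-1)}$. Define
$$F=\sum_{a\in \mathbb{Z}_q}[y=a]\, f\bigl(x \text{ with } x_j \text{ replaced by } x_j^{(a)} \text{ for all } j\in S\bigr).$$
For each input exactly one indicator equals $1$, so $F$ is Boolean. Each indicator $[y=a]$ is a polynomial of degree $1$ in $y$ alone.

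\emph{Degrees in $F$.} Consider a monomial $x^u$ of $f$.
\begin{itemize}
\item If $\mathrm{supp}(u)\cap S=\emptyset$, the contributions $\sum_a [y=a]\alpha_u x^u=\alpha_u x^u$ collapse, so $x^u$ survives unchanged.
\item If $\mathrm{supp}(u)$ meets $S$, then $|u|\leq k$ by the definition of $S$. In $F$ this monomial is replaced by monomials in the copies times degree-$1$ polynomials in $y$. These have width at most $k+1\leq d$.
\end{itemize}
Hence $\mathrm{deg}(F)\leq d$.

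Now take a relevant $j\notin S$, so $\mathrm{deg}_j(f)>k$ by the minimality of $k$. The monomials containing $x_j$ that get widened had width at most $k$ before. After widening they have width at most $k+1\leq \mathrm{deg}_j(f)$. The monomial realizing $\mathrm{deg}_j(f)$ has width greater than $k$, so it avoids $S$ and survives. Therefore $\mathrm{deg}_j(F)=\mathrm{deg}_j(f)$.

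For the copies, any monomial containing $x_j^{(a)}$ has width at most $k+1$. Conversely, a maxonomial-type monomial of $x_j$ of width exactly $k$ produces a term of width exactly $k+1$ involving $x_j^{(a)}$ and $y$, because the $y$-polynomial $[y=a]$ has a non-constant part. So $\mathrm{deg}_{x_j^{(a)}}(F)=k+1$.

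\emph{Relevance and the weight count.}
\begin{itemize}
\item Each copy $x_j^{(a)}$ is relevant: fixing $y=a$ recovers $f$ with $x_j$ renamed.
\item Each relevant $x_j$ with $j\notin S$ stays relevant: choose a witnessing pair for $f$ and copy the $S$-coordinates into all copies.
\item $y$ is relevant. Take $j\in S$ and a witnessing pair $x,x'$ differing in coordinate $j$. Put the value $x_j$ into the copy $x_j^{(0)}$ and $x'_j$ into $x_j^{(1)}$. Then $F$ differs between $y=0$ and $y=1$.
\end{itemize}
Each $j\in S$ contributed weight $q^{-k}$ to $W(f)$. It is replaced by $q$ copies, each of weight $q^{-(k+1)}$, giving the same total. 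All other weights are unchanged, and $y$ adds a positive weight $q^{-\mathrm{deg}_y(F)}$. Thus $W(F)>W(f)=W_{d,q}$, contradicting maximality.

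\emph{Main obstacle.} The delicate step is choosing the set $S$ so that widening monomials does not raise $\mathrm{deg}_j$ for other variables. Modifying a single low-degree variable, or the variables of maximal degree below $d$, fails. Taking $k$ minimal and duplicating all of $S$ simultaneously is what makes the degree bookkeeping in the step above go through.
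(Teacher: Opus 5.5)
Your proof is correct and is essentially the paper's argument. It uses the same selector construction $\sum_{a}[y=a]\,f_a$, which the paper writes with the Lagrange polynomials $P_a(y)$: each duplicated variable is split into $q$ copies whose degree rises by exactly one, so their total weight is unchanged, while the new variable $y$ adds strictly positive weight.

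The only difference is which variables you duplicate. The paper duplicates all relevant variables with $\mathrm{deg}_i(f)<d$ at once, keeping the originals as the $y=0$ copy. Every untouched variable then has degree exactly $d$, so preserving it is immediate. In fact any threshold set $\{j:\mathrm{deg}_j(f)\le t\}$ with $t<d$ works, so your minimum-degree choice is one valid option rather than the only one that makes the bookkeeping go through.
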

\begin{proof}
Let $x_1,\ldots,x_n$ be the relevant variables of $f$.
Assume for contradiction that there are $\ell\geq 1$ variables that do not belong to any monomial of width $d$,
and that these variables are $x_1,\ldots,x_{\ell}$.
We will construct a function $g$ of degree at most $d$ such that $W(g)>W(f)$.

For $0\leq i\leq q-1$, we define a polynomial $P_i$ of degree $q-1$ as follows:

$$P_i(x)=\prod_{j=0,j\neq i}^{q-1}\frac{x-j}{i-j}$$

The polynomials $P_0,\ldots,P_{q-1}$ have the following properties:

(S1) For every $i\in \{0,1,\ldots,q-1\}$, we have $P_i(j)=0$ for all $j\in  \{0,1,\ldots,q-1\}\setminus \{i\}$ and $P_i(i)=1$.

(S2) The polynomial $\sum_{i=0}^{q-1}P_i$ is identically equal to one, that is, this polynomial is a constant.

For $0\leq i\leq q-1$, we define a function $f_i$ by the following rule:
$$
f_i(x_1,x_2,\ldots,x_{n+(q-1)\ell})=\begin{cases}
f(x_1,\ldots,x_n),&\text{if $i=0$;}\\
f(x_{n+(i-1)\ell+1},\ldots,x_{n+i\ell},x_{\ell+1},\ldots,x_n),&\text{if $i\in [q-1]$.}
\end{cases}
$$

Note that all functions $f_0,f_1,\ldots,f_{q-1}$ are Boolean.
We define a function $g$ as follows:

$$g(x_1,x_2,\ldots,x_{n+(q-1)\ell+1})=\sum_{i=0}^{q-1}P_i(x_{n+(q-1)\ell+1})f_i$$

Using (S1), we obtain that $g$ is equal to $f_t$ if $x_{n+(q-1)\ell+1}=t$, where $t\in \mathbb{Z}_q$.
Therefore, $g$ is Boolean.

Let us show that $\mathrm{deg}_i(g)=\mathrm{deg}_i(f)$ for all $i\in \{\ell+1,\ldots,n\}$.
Since $x_i$ does not appear in monomials of $f$ of width $d$ for any $i\leq \ell$, 
the functions $f_0,\ldots,f_{q-1}$ have the same set of monomials of width $d$.
Using this observation and (S2), we see that any monomial of $f$ of width $d$ is a monomial of $g$
(with the same coefficient).
In addition, if $m$ is a monomial of $f_j$ of width at most $d-1$, then $m$ gives monomials of width at most $d$ in $P_jf_j$.
Hence, for every $0\leq j\leq q-1$ any monomial of $f_j$ of width at most $d-1$ gives monomials of width at most $d$ in $g$.
Therefore, we have $\mathrm{deg}_i(g)=\mathrm{deg}_i(f)$ for all $i\in \{\ell+1,\ldots,n\}$.
Thus, $w_i(g)=w_i(f)$ for all $i\in \{\ell+1,\ldots,n\}$.


Let us show that $\mathrm{deg}_{n+(i-1)\ell+r}(g)=\mathrm{deg}_{r}(g)=\mathrm{deg}_{r}(f)+1$ for all $i\in [q-1]$ and $r\in [\ell]$.
Let us fix $i\in [q-1]$ and $r\in [\ell]$.
Note that $$\mathrm{deg}_{n+(i-1)\ell+r}(g)=\mathrm{deg}_{n+(i-1)\ell+r}(P_i(x_{n+(q-1)\ell+1})f_i)=\mathrm{deg}_{n+(i-1)\ell+r}(f_i)+1$$ 
and
$$\mathrm{deg}_{r}(g)=\mathrm{deg}_{r}(P_0(x_{n+(q-1)\ell+1})f_0)=\mathrm{deg}_{r}(f_0)+1=\mathrm{deg}_{r}(f)+1.$$
Therefore, it is sufficient to prove that $\mathrm{deg}_{n+(i-1)\ell+r}(f_i)=\mathrm{deg}_{r}(f_0)$.
The last equality follows from the fact that the set of monomials of $f_i$ containing $x_{n+(i-1)\ell+r}$ is obtained from the set of monomials of $f_0$ containing $x_r$ 
by replacing the variables $x_1,\ldots,x_{\ell}$ with the variables $x_{n+(i-1)\ell+1},\ldots,x_{n+i\ell}$.
Therefore, we have  $\mathrm{deg}_{n+(i-1)\ell+r}(g)=\mathrm{deg}_{r}(g)=\mathrm{deg}_{r}(f)+1$.
This immediately implies that $w_{n+(i-1)\ell+r}(g)=w_r(g)=\frac{1}{q}w_r(f)$ for all $i\in [q-1]$ and $r\in [\ell]$.
Thus, we have

\begin{align*}
W(g) & =\sum_{i=1}^{n+(q-1)\ell+1}w_i(g)=\sum_{i=1}^{\ell}w_i(g)+\sum_{i=\ell+1}^{n}w_i(g)+\sum_{i=n+1}^{n+(q-1)\ell}w_i(g)+w_{n+(q-1)\ell+1}(g)= \\
     & =\frac{1}{q}\sum_{i=1}^{\ell}w_i(f)+\sum_{i=\ell+1}^{n}w_i(f)+\sum_{j=1}^{q-1}\sum_{r=1}^{\ell}w_{n+(j-1)\ell+r}(g)+w_{n+(q-1)\ell+1}(g)= \\
     & =\frac{1}{q}\sum_{i=1}^{\ell}w_i(f)+\sum_{i=\ell+1}^{n}w_i(f)+\frac{1}{q}\sum_{j=1}^{q-1}\sum_{r=1}^{\ell}w_r(f)+w_{n+(q-1)\ell+1}(g)= \\
     & =\frac{1}{q}\sum_{i=1}^{\ell}w_i(f)+\sum_{i=\ell+1}^{n}w_i(f)+\frac{q-1}{q}\sum_{r=1}^{\ell}w_r(f)+w_{n+(q-1)\ell+1}(g)= \\
     & =\sum_{i=1}^{\ell}w_i(f)+\sum_{i=\ell+1}^{n}w_i(f)+w_{n+(q-1)\ell+1}(g)=W(f)+w_{n+(q-1)\ell+1}(g).
\end{align*}

Note that $x_{n+(q-1)\ell+1}$ is a relevant variable of $g$.
Indeed, for any monomial $m$ of $f$ containing $x_1$, $\frac{(-1)^{q-1}}{(q-1)!}mx_{n+(q-1)\ell+1}^{q-1}$ is a monomial of $g$.
Therefore, we have $W(g)>W(f)$.
Thus, $g$ is a Boolean function of degree at most $d$ with $W(g)>W(f)$.
This gives us the required contradiction.

\end{proof}

\begin{corollary}\label{Cor:C_d=W_d}
For all $d\geq 1$, we have $C_{d,q}=W_{d,q}$.
\end{corollary}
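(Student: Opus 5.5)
We need to prove $C_{d,q}=W_{d,q}$, and we do it with two inequalities.

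\textbf{Step 1: $W_{d,q}\geq C_{d,q}$.} Take $f\in\mathcal{B}(n,q,\leq d)$ with $R(f)=R_{d,q}$. For every relevant variable $x_i$ we have $\mathrm{deg}_i(f)\leq \mathrm{deg}(f)\leq d$. Hence $w_i(f)\geq q^{-d}$, and so
$$W_{d,q}\geq W(f)\geq R(f)q^{-d}=C_{d,q}.$$
Here one should note that a relevant variable appears in some monomial with nonzero coefficient. Otherwise $f$ would not depend on it, so $\mathrm{deg}_i(f)$ is well defined.

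\textbf{Step 2: $W_{d,q}\leq C_{d,q}$.} Let $f$ be $W_{d,q}$-maximizing; it exists because the maximum is over finitely many functions, as noted earlier. By Lemma~\ref{L:W_d=C_d}, every relevant variable $x_i$ lies in a monomial of width $d$. Since $\mathrm{deg}(f)\leq d$, this forces $\mathrm{deg}_i(f)=d$, so $w_i(f)=q^{-d}$. Therefore
$$W_{d,q}=W(f)=R(f)q^{-d}\leq R_{d,q}q^{-d}=C_{d,q}.$$

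\textbf{Main obstacle.} The only subtle point is the case where the maximizing $f$ has degree strictly less than $d$. If it has any relevant variable, the lemma still gives a monomial of width exactly $d$, which forces $\mathrm{deg}(f)=d$, so no problem arises. If it has no relevant variable, then $W(f)=0$, and the inequality is trivial. Otherwise the statement follows immediately from Lemma~\ref{L:W_d=C_d}.
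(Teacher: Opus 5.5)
Your proposal is correct and matches the paper's proof: the bound $C_{d,q}\le W_{d,q}$ comes from $w_i(f)\ge q^{-d}$ for a function with $R_{d,q}$ relevant variables, and the reverse bound comes from applying Lemma~\ref{L:W_d=C_d} to a $W_{d,q}$-maximizing function, which gives $W(f)=R(f)q^{-d}$. Your extra remarks about relevant variables appearing in some monomial and the degree-$<d$ edge case are correct, though not needed beyond what the lemma already provides.
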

\begin{proof}
Let $f\in \mathcal{B}(n,q,\leq d)$ and let $f$ have $R_{d,q}$ relevant variables.
We have
$$W(f)=\sum_{i\in [n]}w_i(f)\geq R(f)q^{-d}=R_{d,q}q^{-d}=C_{d,q}.$$
Therefore, $C_{d,q}\leq W_{d,q}$.

Let $f\in \mathcal{B}(n,q,\leq d)$ and let $f$ be $W_d$-maximizing.
By Lemma \ref{L:W_d=C_d}, we have
$$W(f)=\sum_{i\in [n]}w_i(f)=R(f)q^{-d}\leq R_{d,q}q^{-d}=C_{d,q}.$$
Since $W(f)=W_{d,q}$, we have $C_{d,q}\geq W_{d,q}$.
\end{proof}

\section{Upper bounds for $W_{d,q}$ and $C_{d,q}$}\label{Sec:C_d<}

In this section, we prove that $W_{d,q}\leq W_{d-1,q}+h_{d,q}q^{-d}$.
As an immediate consequence, we obtain that $C_{d,q}\leq C_{d-1,q}+h_{d,q}q^{-d}$.

We recall several definitions.
A {\em partial assignment} is a mapping $\alpha:[n]\rightarrow \{0,1,\ldots,q-1,*\}$,
and $\mathrm{Fixed}(\alpha)$ is the set $\{i\in [n]:\alpha(i)\in \{0,1,\ldots,q-1\}\}$.
For $J\subseteq [n]$, let $\mathrm{PA}(J)$ be the set of all partial assignments $\alpha$ with $\mathrm{Fixed}(\alpha)=J$.
The {\em restriction} of a function $f\in \mathcal{R}(n,q)$ by $\alpha$, denoted by $f_{\alpha}$, is the function on variable set $\{x_i:i\in [n]-\mathrm{Fixed}(\alpha)\}$
obtained by setting $x_i=\alpha_i$ for each $i\in \mathrm{Fixed}(\alpha)$.

The following result is a $q$-ary generalization of Claim 2.4 from \cite{CHS20}.

\begin{lemma}\label{L:w_i(f)<}
Let $f\in \mathcal{R}(n,q)$.
For every $J\subseteq [n]$ and $i\not\in J$, we have 
$$w_i(f)\leq q^{-|J|}\sum_{\alpha\in \mathrm{PA}(J)}w_i(f_{\alpha}).$$
\end{lemma}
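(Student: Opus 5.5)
We need $w_i(f)\le q^{-|J|}\sum_{\alpha\in\mathrm{PA}(J)} w_i(f_\alpha)$. We reduce to the case $|J|=1$ and then iterate.

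\textbf{Reduction to $J=\{j\}$.} Suppose the one-variable inequality $w_i(f)\le q^{-1}\sum_{a\in\mathbb{Z}_q} w_i(f_{x_j=a})$ holds for every real-valued $f$, every $j$, and every $i\ne j$. Write $J=\{j_1,\ldots,j_k\}$ and restrict the variables one at a time. Apply the inequality first to $f$, then to each restriction $f_{x_{j_1}=a_1}$, and so on. Every partial assignment in $\mathrm{PA}(J)$ is reached exactly once, and each step contributes a factor $q^{-1}$, so induction on $|J|$ gives the lemma.

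\textbf{The case $J=\{j\}$.} If $x_i$ is not relevant for $f$, the left side is $0$ and there is nothing to prove. Suppose $x_i$ is relevant, and let $D=\deg_i(f)$. Pick $u$ with $i\in\mathrm{supp}(u)$, $|u|=D$, and $\alpha_u\ne 0$. Group the monomials of $f$ according to the exponent of $x_j$:
\[
f=\sum_{e=0}^{q-1} x_j^{e}\, g_e,
\]
where the $g_e$ do not involve $x_j$. Then $f_{x_j=a}=\sum_e a^e g_e$. The coefficient vector $(a^e)_e$ indexed by $a\in\mathbb{Z}_q$ forms a Vandermonde matrix, which is invertible over $\mathbb{R}$ because the points $0,\ldots,q-1$ are distinct. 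We consider two cases according to whether $j\in\mathrm{supp}(u)$.

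\emph{Case $j\notin\mathrm{supp}(u)$.} The monomial $x^u$ lies in $g_0$ with coefficient $\alpha_u$, and $g_0=f_{x_j=0}$. So $f_{x_j=0}$ contains $x^u$, hence has $x_i$ relevant and $\deg_i(f_{x_j=0})\ge D$. Restriction never increases $\deg_i$: any monomial of $f_{x_j=a}$ containing $x_i$ comes from a monomial of $f$ containing $x_i$ of at least that width. So $\deg_i(f_{x_j=a})\le D$ for every $a$. This gives $w_i(f_{x_j=0})=q^{-D}$ on its own, but the inequality needs $\sum_a w_i(f_{x_j=a})\ge q\cdot q^{-D}$, so one value of $a$ is not enough.

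Here we use invertibility. Consider the coefficient of $x^u$ in each $f_{x_j=a}$, which is $\sum_e a^e\cdot(\text{coefficient of } x^u x_j^e \text{ in } f)$. The $e=0$ term is $\alpha_u\neq0$. If this coefficient vanished for all $a$, the invertible Vandermonde system would force all these coefficients of $f$ to be $0$, including $\alpha_u$, a contradiction. So at least one $a$ has $\deg_i(f_{x_j=a})=D$. This still gives only one restriction, and getting a sum of $q$ restrictions each of weight $q^{-D}$ requires a sharper choice of $u$.

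\emph{Refinement: choose $u$ to maximize $u_j$, i.e.\ allow $j\in\mathrm{supp}(u)$.} Among all monomials containing $x_i$, take one of maximal width $D$. Any such monomial $x^u x_j^e$ with $e\ge1$ loses $x_j$ under restriction, becoming $x^{u'}$ of width $D-1$ with $x_i$ still present.

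\emph{Case: some maximal monomial contains $x_j$.} Fix the $x_j$-free part $x^{u'}$ (width $D-1$, containing $x_i$) and the vector $c_e$ of coefficients of $x^{u'}x_j^e$ in $f$, where some $c_e\ne0$ with $e\ge1$. The coefficient of $x^{u'}$ in $f_{x_j=a}$ is $p(a)=\sum_e c_e a^e$, a nonzero polynomial of degree at most $q-1$. So it vanishes at at most $q-1$ points of $\mathbb{Z}_q$. Hence some $a$ has $\deg_i(f_{x_j=a})\ge D-1$, so $w_i(f_{x_j=a})$ is at least $q^{-(D-1)}=q\cdot q^{-D}$. This already yields the inequality from a single restriction.

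\emph{Case: no maximal monomial contains $x_j$.} The monomial $x^u$ survives unchanged in every restriction, since its coefficient in $f_{x_j=a}$ is $\alpha_u$ plus contributions from monomials $x^ux_j^e$, which have width $D+1$ and so do not exist. Thus $\deg_i(f_{x_j=a})=D$ for all $q$ values of $a$, and the sum is $q\cdot q^{-D}$, as required.

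In both cases $x_i$ is relevant in the relevant restriction, because a real polynomial of degree at most $q-1$ per variable that contains $x_i$ is nonconstant in $x_i$, by uniqueness of the representation. The main obstacle is this case split: choosing $u$ of maximal width among monomials containing $x_i$, and handling the case where $x_j$ appears in such a monomial via the nonvanishing of a univariate polynomial of degree at most $q-1$ on $\mathbb{Z}_q$.
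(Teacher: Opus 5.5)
There is a genuine gap in your case ``some maximal monomial contains $x_j$''. Your reduction to $|J|=1$ matches the paper's induction, and your case ``no maximal monomial contains $x_j$'' is correct.

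In the faulty case, you show that the coefficient $p(a)$ of $x^{u'}$ in $f_{x_j=a}$ is nonzero for some $a$. You conclude $\deg_i(f_{x_j=a})\ge D-1$ and then claim $w_i(f_{x_j=a})\ge q^{-(D-1)}$. That last inference runs the wrong way. Since $w_i=q^{-\deg_i}$ is decreasing in $\deg_i$, a lower bound on $\deg_i(f_{x_j=a})$ only gives an upper bound on $w_i(f_{x_j=a})$. What you need is the upper bound $\deg_i(f_{x_j=a})\le D-1$, together with relevance of $x_i$ in $f_{x_j=a}$.

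Your case hypothesis does not give that upper bound. Other width-$D$ monomials containing $x_i$ but not $x_j$ may exist, and they survive unchanged in every restriction. For example, take $f=x_1x_2+x_1x_3$ with $i=1$, $j=2$. Then $f_{x_2=a}=ax_1+x_1x_3$, so $\deg_1(f_{x_2=a})=2=D$ and $w_1(f_{x_2=a})=q^{-2}$ for every $a$. This contradicts your claimed bound $q^{-1}$. The lemma still holds here, but not by your argument.

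The fix is to split the way the paper does: on whether \emph{every} width-$D$ monomial containing $x_i$ also contains $x_j$.

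If some such monomial misses $x_j$, your ``no maximal monomial contains $x_j$'' argument applies verbatim, since it only uses one such monomial. It gives $\deg_i(f_{x_j=a})=D$ for all $a$, hence equality.

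If all of them contain $x_j$, then no monomial of $f_{x_j=a}$ containing $x_i$ has width at least $D$. Indeed, a width-$D$ monomial $x^{u'}$ free of $x_j$ has coefficient $0$ in $f$, and $x^{u'}x_j^e$ with $e\ge 1$ would have width $D+1$. So $\deg_i(f_{x_j=a})\le D-1$ for every $a$. Now take some $a$ for which $x_i$ is relevant in $f_{x_j=a}$. Your nonvanishing-polynomial argument provides one. Alternatively, take the $j$-th coordinate of a pair of tuples witnessing relevance of $x_i$ in $f$. This yields $w_i(f_{x_j=a})\ge q^{-(D-1)}=q\,w_i(f)$, as required.

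The Vandermonde detour at the start is unnecessary.
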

\begin{proof}
If $x_i$ is not a relevant variable of $f$, then $w_i(f)=0$ and the statement of the lemma holds automatically.
Thus, we can assume that $x_i$ is a relevant variable of $f$.
Fix $j\in J$.
For $a\in \mathbb{Z}_q$, let $f_a$ be the restriction of $f$ to the set $\{x\in \mathbb{Z}_q^n:x_j=a\}$.
Let us prove the statement of the lemma by induction on $|J|$.
First, we consider the case $|J|=1$.
There are two cases for monomials of $f$ of width $\mathrm{deg}_i(f)$ containing $x_i$.

Suppose that there exists a monomial of $f$ of width $\mathrm{deg}_i(f)$ containing $x_i$ and not containing $x_j$.
Note that $f_a$ contains this monomial for each $a\in \mathbb{Z}_q$.
Therefore, we have $\mathrm{deg}_i(f_a)=\mathrm{deg}_i(f)$ for all $a\in \mathbb{Z}_q$.
Thus, in this case we have $w_i(f)=\frac{1}{q}\sum_{a=0}^{q-1}w_i(f_a)$.

Suppose that all monomials of $f$ of width $\mathrm{deg}_i(f)$ containing $x_i$ also contain $x_j$.
In this case, we have $\mathrm{deg}_i(f_a)\leq \mathrm{deg}_i(f)-1$ for all $a\in \mathbb{Z}_q$.
Since $x_i$ is a relevant variable for $f$, there exists $b\in \mathbb{Z}_q$ such that $x_i$ is a relevant variable for $f_b$.
Therefore, we have $w_i(f)\leq \frac{1}{q}w_i(f_b)$.
This implies that $w_i(f)\leq \frac{1}{q}\sum_{a=0}^{q-1}w_i(f_a)$.

Let us prove the induction step for $|J|\geq 2$.
Using the induction hypothesis for $f_a$, where $a\in \mathbb{Z}_q$, and the set $J-\{j\}$, 
we obtain that the inequality 

\begin{equation}\label{Eq:1}
w_i(f_a)\leq q^{1-|J|}\sum_{\beta\in \mathrm{PA}(J-\{j\})}w_i((f_a)_{\beta})
\end{equation}
holds for all  $a\in \mathbb{Z}_q$.
Combining (\ref{Eq:1}) and the inequality $w_i(f)\leq \frac{1}{q}\sum_{a=0}^{q-1}w_i(f_a)$, we get

$$w_i(f)\leq q^{-|J|}\sum_{\alpha\in \mathrm{PA}(J)}w_i(f_{\alpha}).$$
\end{proof}

The following result is a $q$-ary generalization of Proposition 2.3 from \cite{CHS20}.

\begin{lemma}\label{L:W_d<}
For all $d\geq 1$, we have $W_{d,q}\leq W_{d-1,q}+h_{d,q}q^{-d}$.
\end{lemma}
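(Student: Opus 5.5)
Take $f\in\mathcal{B}(n,q,\leq d)$ to be $W_{d,q}$-maximizing. We may assume $\mathrm{deg}(f)=d$: otherwise $W(f)\leq W_{d-1,q}$ and there is nothing to prove. Let $H$ be a maxonomial hitting set for $f$ with $|H|=h(f)\leq h_{d,q}$. We then split the weight as
$$W(f)=\sum_{i\in H}w_i(f)+\sum_{i\notin H}w_i(f).$$
For the first sum we use only the trivial bound $w_i(f)\leq q^{-\mathrm{deg}_i(f)}$. By Lemma \ref{L:W_d=C_d}, every relevant variable of the maximizing function $f$ lies in a monomial of width $d$, so $w_i(f)\leq q^{-d}$ for each $i$. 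Hence the first sum is at most $h_{d,q}q^{-d}$.

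For the second sum we apply Lemma \ref{L:w_i(f)<} with $J=H$ to each $i\notin H$, which gives
$$\sum_{i\notin H}w_i(f)\leq q^{-|H|}\sum_{\alpha\in\mathrm{PA}(H)}\sum_{i\notin H}w_i(f_\alpha)=q^{-|H|}\sum_{\alpha\in\mathrm{PA}(H)}W(f_\alpha).$$
The equality holds because the variables of $f_\alpha$ are exactly $\{x_i:i\notin H\}$. There are $q^{|H|}$ partial assignments in $\mathrm{PA}(H)$, so it suffices to show that $W(f_\alpha)\leq W_{d-1,q}$ for each $\alpha$, i.e.\ that every restriction $f_\alpha$ is a Boolean function of degree at most $d-1$.

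This degree claim is the main step. Each $f_\alpha$ is clearly Boolean. For the degree, I substitute $x_j=\alpha(j)$ for $j\in H$ into the unique multilinear-type representation $\sum_u\alpha_u x^u$. Every monomial $x^u$ then becomes a constant times $x^{u'}$, where $u'$ is $u$ with the coordinates in $H$ deleted. The result is again a polynomial of degree at most $q-1$ in each variable, so by uniqueness it is the representation of $f_\alpha$. A monomial of width $d$ has $\mathrm{supp}(u)$ a maxonomial set, which meets $H$, so after substitution its width drops to at most $d-1$. Monomials of width at most $d-1$ keep width at most $d-1$. Hence $\mathrm{deg}(f_\alpha)\leq d-1$.

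Combining the two parts gives $W_{d,q}=W(f)\leq h_{d,q}q^{-d}+W_{d-1,q}$.
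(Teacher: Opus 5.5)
Your proof is correct and follows the paper's argument. You split $W(f)$ over a minimum maxonomial hitting set $H$ and its complement, bound the $H$-part by $|H|q^{-d}$, and apply Lemma \ref{L:w_i(f)<} with $J=H$ together with $\mathrm{deg}(f_\alpha)\le d-1$. The differences are minor: you get $\mathrm{deg}_i(f)=d$ on $H$ from Lemma \ref{L:W_d=C_d} instead of from the minimality of $H$, and you spell out the case $\mathrm{deg}(f)<d$ and the degree drop under restriction, both of which the paper leaves implicit.
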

\begin{proof}
Let $f\in \mathcal{B}(n,q,\leq d)$ and let $f$ be $W_d$-maximizing.
Let $H$ be a maxonomial hitting set for $f$ of minimum size. 
Note that $\mathrm{deg}_i(f)=d$ for all $i\in H$,
since otherwise $H-\{i\}$ would be a smaller maxonomial hitting set.
Thus, we have:

\begin{equation}\label{Eq:2}
W(f)=\sum_{i\in [n]}w_i(f)=q^{-d}|H|+\sum_{i\in [n]-H}w_i(f).
\end{equation}

Using Lemma \ref{L:w_i(f)<} for $J=H$ and summing over $i\in [n]-H$, 
we get the following:
\begin{equation}\label{Eq:3}
\begin{split}
& \sum_{i\in [n]-H}w_i(f)\leq q^{-|H|}\sum_{i\in [n]-H}\sum_{\alpha\in \mathrm{PA}(H)}w_i(f_{\alpha})= \\
& =q^{-|H|}\sum_{\alpha\in \mathrm{PA}(H)}W(f_{\alpha})\leq W_{d-1,q},
\end{split}
\end{equation}
where the last inequality follows since $\mathrm{deg}(f_{\alpha})\leq d-1$ for all $\alpha\in \mathrm{PA}(H)$.
Combining (\ref{Eq:2}) and (\ref{Eq:3}), we immediately obtain that $W_{d,q}\leq W_{d-1,q}+h_{d,q}q^{-d}$.
\end{proof}

Using Lemma \ref{L:W_d<} and Corollary \ref{Cor:C_d=W_d}, we immediately obtain the following.

\begin{corollary}\label{Cor:C_d<}
For all $d\geq 1$, we have $C_{d,q}\leq C_{d-1,q}+h_{d,q}q^{-d}$.  
\end{corollary}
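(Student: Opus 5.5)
The corollary is immediate from Lemma \ref{L:W_d<} and Corollary \ref{Cor:C_d=W_d}. I simply translate the recursive inequality for $W$ into one for $C$.

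First, apply Corollary \ref{Cor:C_d=W_d} at level $d$ to get $C_{d,q}=W_{d,q}$. Lemma \ref{L:W_d<} then gives $C_{d,q}=W_{d,q}\leq W_{d-1,q}+h_{d,q}q^{-d}$. It remains to replace $W_{d-1,q}$ by $C_{d-1,q}$. For $d\geq 2$ this is again Corollary \ref{Cor:C_d=W_d}, now applied at level $d-1\geq 1$.

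The only point needing care is the boundary case $d=1$. Corollary \ref{Cor:C_d=W_d} is stated only for $d\geq 1$, so $W_{0,q}=C_{0,q}$ must be checked directly. A Boolean function of degree at most $0$ is constant, so it has no relevant variables. Hence $w_i(f)=0$ for all $i$, which gives $W(f)=0$ and $R(f)=0$. Therefore $W_{0,q}=0=R_{0,q}q^{0}=C_{0,q}$, and the substitution is valid for $d=1$ as well.

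Combining these steps yields $C_{d,q}\leq C_{d-1,q}+h_{d,q}q^{-d}$ for all $d\geq 1$. I expect no real obstacle; the $d=1$ base case is the only subtlety.
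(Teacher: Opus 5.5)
Your proof is correct and is exactly the paper's argument: combine Lemma~\ref{L:W_d<} with Corollary~\ref{Cor:C_d=W_d} to turn the recursion for $W_{d,q}$ into one for $C_{d,q}$. Your direct check that $W_{0,q}=C_{0,q}=0$ handles the $d=1$ case, which the paper leaves implicit even though Corollary~\ref{Cor:C_d=W_d} is stated only for $d\geq 1$.
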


\section{Main results}\label{Sec:Main}
The main result of this paper is the following.
\begin{theorem}\label{T:1}
For every $q\geq 3$, we have $R_{d,q}\leq m_qq^d$, where $m_q=\frac{2q(q^2 + 4q + 1)}{(q-1)^4}$.
\end{theorem}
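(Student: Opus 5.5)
The plan is to unroll the recursion of Corollary~\ref{Cor:C_d<} down to degree $0$, bound each term using Corollary~\ref{Cor:h_d<}, and then sum the resulting series in closed form. For the base case, a Boolean function of degree $0$ is constant, so it has no relevant variables. Hence $R_{0,q}=0$ and $C_{0,q}=0$.

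Applying Corollary~\ref{Cor:C_d<} repeatedly for $k=d,d-1,\ldots,1$ telescopes to
$$C_{d,q}\leq C_{0,q}+\sum_{k=1}^{d}h_{k,q}q^{-k}=\sum_{k=1}^{d}h_{k,q}q^{-k}.$$
Corollary~\ref{Cor:h_d<} gives $h_{k,q}\leq 2k^3$. Since every term is nonnegative, we may extend the sum to infinity:
$$C_{d,q}\leq 2\sum_{k=1}^{\infty}k^3q^{-k}.$$
One point needs a moment's care. The quantity $h_{k,q}$ is defined over functions of degree exactly $k$, while $W_{d,q}$ and $C_{d,q}$ range over degree at most $d$. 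This does not cause trouble, because Lemma~\ref{L:W_d<} is applied to a $W_{d,q}$-maximizing function. If that maximizer has degree $d'<d$, then $W_{d,q}=W_{d',q}\leq W_{d-1,q}$ and the recursion holds trivially. I take the recursion as already established by the stated corollary in any case.

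The remaining step is the generating-function identity $\sum_{k\geq1}k^3x^k=\frac{x(1+4x+x^2)}{(1-x)^4}$ for $|x|<1$. It follows by applying $x\frac{d}{dx}$ three times to $\frac{1}{1-x}$. Substituting $x=1/q$ with $q\geq 3$ gives
$$\sum_{k\geq1}k^3q^{-k}=\frac{q^{-1}(q^2+4q+1)q^{-2}}{(q-1)^4q^{-4}}=\frac{q(q^2+4q+1)}{(q-1)^4}.$$
Therefore $C_{d,q}\leq m_q$, and since $R_{d,q}=C_{d,q}q^d$ by definition, we get $R_{d,q}\leq m_qq^d$.

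No step is a real obstacle, since all the substantive work is in the earlier sections. The only things to verify are the series identity and the algebraic simplification to $m_q$; these are routine to check, for example at $x=1/2$, where the series equals $26$.
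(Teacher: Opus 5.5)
Your proposal is correct and follows essentially the same route as the paper: telescope Corollary~\ref{Cor:C_d<} down to $C_{0,q}=0$, bound each $h_{k,q}\le 2k^3$ by Corollary~\ref{Cor:h_d<}, extend to the infinite series, and evaluate $\sum_{k\ge1}k^3q^{-k}=\frac{q(q^2+4q+1)}{(q-1)^4}$. Your extra remarks make explicit two details the paper leaves implicit. One is the base case $C_{0,q}=0$; the other is the degree-exactly-$d$ versus degree-at-most-$d$ point in applying Lemma~\ref{L:W_d<}.
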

\begin{proof}
By Corollary \ref{Cor:C_d<}, we have $C_{d,q}\leq C_{d-1,q}+h_{d,q}q^{-d}$.
This implies that $C_{d,q}\leq \sum_{i=1}^{d}h_{i,q}q^{-i}$.
Combining this inequality with Corollary \ref{Cor:h_d<}, we obtain that
$C_{d,q}\leq 2\sum_{i=1}^{d}i^3q^{-i}$.
Therefore, $C_{d,q}\leq 2\sum_{i=1}^{\infty}i^3q^{-i}$.
Since $\sum_{i=1}^{\infty}i^3q^{-i}=\frac{q(q^2 + 4q + 1)}{(q-1)^4}$, we have $C_{d,q}\leq m_q$.
\end{proof}

For $q\in \{3,4,5,6,7\}$, we can improve the bound $R_{d,q} \le m_q q^d$ as follows.
First, we need the following fact.

\begin{proposition}\label{P:vozrastaet}
For all $d\geq 1$, we have $C_{d,q}\geq C_{d-1,q}+q^{-d}$.
\end{proposition}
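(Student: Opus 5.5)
Since $C_{d,q}=R_{d,q}q^{-d}$, the statement $C_{d,q}\geq C_{d-1,q}+q^{-d}$ is equivalent to $R_{d,q}\geq qR_{d-1,q}+1$. I would prove this by an explicit construction that generalizes the decision-tree construction mentioned in the introduction. We are given a Boolean function of degree at most $d-1$ with $R_{d-1,q}$ relevant variables. From it we build a Boolean function of degree at most $d$ with $qR_{d-1,q}+1$ relevant variables.

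Let $f\in\mathcal{B}(n,q,\leq d-1)$ satisfy $R(f)=R_{d-1,q}$. Without loss of generality its relevant variables are $x_1,\ldots,x_n$, since irrelevant variables can be discarded. We take $q$ copies $f_0,\ldots,f_{q-1}$ of $f$ on pairwise disjoint blocks of $n$ variables, so $nq$ variables in total, plus one fresh selector variable $y$. Using the Lagrange polynomials $P_0,\ldots,P_{q-1}$ from the proof of Lemma \ref{L:W_d=C_d}, define
$$g=\sum_{i=0}^{q-1}P_i(y)f_i .$$
By property (S1), $g$ equals $f_t$ whenever $y=t$, so $g$ is Boolean. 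Each term $P_i(y)f_i$ is a product of a function of degree at most $d-1$ with a univariate polynomial in the new variable $y$. Multiplying a monomial $x^u$ by $y^k$ gives a monomial of width $|u|+1\leq d$, and no reduction modulo the identities on $\mathbb{Z}_q$ is needed because $y$ does not occur in $f_i$. Hence $\mathrm{deg}(g)\leq d$.

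It remains to count relevant variables. Every variable of block $i$ is relevant for $g$: fix $y=i$, and then $g=f_i$, in which that variable is relevant. The variable $y$ is also relevant. If $R_{d-1,q}\geq 1$, the copies $f_0$ and $f_1$ live on disjoint variables, so we can choose an input with $f_0\neq f_1$. For example, pick $a$ and $b$ with $f(a)\neq f(b)$, which exist because $f$ is nonconstant, and assign $a$ to block $0$ and $b$ to block $1$. Changing $y$ from $0$ to $1$ then changes $g$.

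The only mild obstacle is the degenerate case $d=1$, where $R_{0,q}=0$ and $f$ is constant, so this argument for $y$ fails. There we use $g=P_0(y)$ directly: it is Boolean, has degree $1$, and depends on $y$, giving $R_{1,q}\geq 1=qR_{0,q}+1$. In all cases we conclude $R_{d,q}\geq qR_{d-1,q}+1$, and dividing by $q^d$ yields the proposition.
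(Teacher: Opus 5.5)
Your proof is correct and uses the same construction as the paper: $q$ disjoint copies of an optimal degree-$(d-1)$ function, combined via the Lagrange selector $g=\sum_{i=0}^{q-1}P_i(y)f_i$, giving $R_{d,q}\geq qR_{d-1,q}+1$. Your extra care is justified, because the paper asserts $\mathrm{deg}(g)=\mathrm{deg}(f)+1$ and $R(g)=qR(f)+1$ without justifying that $y$ is relevant, and these claims fail for $d=1$ (a constant $f$ makes $g$ constant by (S2)); your separate treatment via $g=P_0(y)$ repairs this case.
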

\begin{proof}
Let $f\in \mathcal{B}(n,q,\leq d-1)$ and let $R(f)=R_{d-1,q}$.
For $0\leq i\leq q-1$, we define a function $f_i$ on $\mathbb{Z}_{q}^{qn}$ as follows: 
$$f_i(x_1,x_2,\ldots,x_{qn})=f(x_{in+1},\ldots,x_{(i+1)n}).$$

For $0\leq i\leq q-1$, we define a polynomial $P_i$ of degree $q-1$ as follows:
$$P_i(y)=\prod_{j=0,j\neq i}^{q-1}\frac{y-j}{i-j}$$

We define a function $g$ on $\mathbb{Z}_{q}^{qn+1}$ as follows: 
$$g(x_1,x_2,\ldots,x_{qn},y)=\sum_{i=0}^{q-1}P_i(y)f_i.$$

Note that $g$ is Boolean.
We also note that $\mathrm{deg}(g)=\mathrm{deg}(f)+1$.
Therefore, $\mathrm{deg}(g)\leq d$.
Moreover, we have 
$$R(g)=\sum_{i=0}^{q-1}R(f_i)+1=qR(f)+1=qR_{d-1,q}+1.$$
Hence, $R_{d,q}\geq qR_{d-1,q}+1$.
This implies that $C_{d,q}\geq C_{d-1,q}+q^{-d}$.
\end{proof}

Proposition \ref{P:vozrastaet} implies that the sequence $C_{d,q}$ is increasing in $d$.
In addition, by Theorem \ref{T:1}, the sequence $C_{d,q}$ is bounded.
Therefore, the limit $\lim_{d \to \infty}C_{d,q}$ exists and is finite.
We denote this limit by $t_q$.

By Corollary \ref{Cor:C_d<}, we have $C_{d,q}\leq C_{d-1,q}+h_{d,q}q^{-d}$.
This implies that $C_{m,q}\leq C_{d,q}+\sum_{i=d+1}^{m}h_{i,q}q^{-i}$ for all $m>d$.
Hence, $t_q\leq C_{d,q}+\sum_{i=d+1}^{\infty}h_{i,q}q^{-i}$.
By \cite[Corollary 2]{V26}, we have $C_{d,q}\leq \frac{dq}{4(q-1)}$. 
Therefore, $t_q\leq \frac{dq}{4(q-1)}+\sum_{i=d+1}^{\infty}h_{i,q}q^{-i}$.
Combining this inequality with Corollary \ref{Cor:h_d<}, we get
$t_q\leq \frac{dq}{4(q-1)}+2\sum_{i=d+1}^{\infty}i^3q^{-i}$.
Since $\sum_{i=1}^{\infty}i^3q^{-i}=\frac{q(q^2 + 4q + 1)}{(q-1)^4}$, we have
$t_q\leq m_q+\frac{dq}{4(q-1)}-2\sum_{i=1}^{d}i^3q^{-i}$.
Thus,

\begin{equation}\label{Eq:4}
t_q\leq \min_{d\geq 0}\left(m_q+\frac{dq}{4(q-1)}-2\sum_{i=1}^{d}i^3q^{-i}\right)
\end{equation}

We have the following:

\begin{enumerate}
  
  \item For $q=3$, the minimum in \eqref{Eq:4} is achieved at $d=6$ and equals $\frac{1387}{486}$.
  Therefore, $t_3\leq 2.854$.
  
  \item For $q=4$, the minimum in \eqref{Eq:4} is achieved at $d=4$ and equals $\frac{1511}{864}$.
  Therefore, $t_4\leq 1.749$.
  
  \item For $q=5$, the minimum in \eqref{Eq:4} is achieved at $d=3$ and equals $\frac{10099}{8000}$.
  Therefore, $t_5\leq 1.263$.
  
  \item For $q=6$, the minimum in \eqref{Eq:4} is achieved at $d=2$ and equals $\frac{5588}{5625}$.
  Therefore, $t_6\leq 0.994$.
  
  \item For $q=7$, the minimum in \eqref{Eq:4} is achieved at $d=2$ and equals $\frac{2153}{2646}$.
  Therefore, $t_7\leq 0.814$.
  
  \item For $q\geq 8$, the minimum in \eqref{Eq:4} is achieved at $d=0$ and equals $m_q$.
  
\end{enumerate}

Taking into account that $C_{d,q}\le t_q$, we immediately obtain the following result.

\begin{theorem}
The following statements hold:

\begin{enumerate}
  
  \item For all $d\geq 1$, we have $R_{d,3}\leq 2.854\cdot 3^d$.
  
  \item For all $d\geq 1$, we have $R_{d,4}\leq 1.749\cdot 4^d$.
  
  \item For all $d\geq 1$, we have $R_{d,5}\leq 1.263\cdot 5^d$.
  
  \item For all $d\geq 1$, we have $R_{d,6}\leq 0.994\cdot 6^d$.
  
  \item For all $d\geq 1$, we have $R_{d,7}\leq 0.814\cdot 7^d$.

\end{enumerate}

\end{theorem}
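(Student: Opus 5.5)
The plan is to read each of the five bounds off inequality \eqref{Eq:4}, using that $C_{d,q}\le t_q$ for every $d$. By Proposition~\ref{P:vozrastaet} the sequence $(C_{d,q})_{d\ge 0}$ is increasing, and by Theorem~\ref{T:1} it is bounded. Hence it converges to its supremum $t_q$, and $C_{d,q}\le t_q$ for all $d\ge 1$. Since $R_{d,q}=C_{d,q}q^d$, each statement $R_{d,q}\le c_q q^d$ follows once we show $t_q\le c_q$ for the claimed constant $c_q$.

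To get $t_q\le c_q$, note that \eqref{Eq:4} says $t_q$ is at most $m_q+\frac{dq}{4(q-1)}-2\sum_{i=1}^{d}i^3q^{-i}$ for every fixed $d\ge 0$. This comes from chaining Corollary~\ref{Cor:C_d<} from $d$ to $\infty$, bounding the tail with Corollary~\ref{Cor:h_d<}, and using \cite[Corollary 2]{V26} for $C_{d,q}$. So it suffices to evaluate this expression at one good $d$ for each $q$; I do not need to prove that this $d$ is the minimizer. I take $d=6,4,3,2,2$ for $q=3,4,5,6,7$, respectively. For $q=3$ we have $m_3=\frac{2\cdot 3\cdot 22}{16}=\frac{33}{4}$. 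The term $\frac{6\cdot 3}{8}=\frac94$ gives $\frac{42}{4}=10.5$, and we subtract $2\sum_{i=1}^6 i^3 3^{-i}$. This should come to $\frac{1387}{486}\approx 2.8539<2.854$. The other cases are analogous exact rational computations: $\frac{1511}{864}\approx1.7488$, $\frac{10099}{8000}\approx1.2624$, $\frac{5588}{5625}\approx0.99342$ and $\frac{2153}{2646}\approx0.81368$. Each lies below the stated decimal.

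The only real work is the arithmetic check that these rational values are correct and round down below the claimed constants. For example, $q=6$, $d=2$ gives $m_6=\frac{12\cdot 61}{625}=\frac{732}{625}$ and $\frac{2\cdot 6}{20}=\frac35$. Subtracting $2\bigl(\frac16+\frac{8}{36}\bigr)=\frac79$ yields $\frac{732}{625}+\frac35-\frac79$. I would verify each such sum with a common denominator.

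A small subtlety is that the theorem is stated for all $d\ge1$, not only asymptotically. This is exactly what monotonicity gives: $C_{d,q}\le t_q$ holds for every $d$.
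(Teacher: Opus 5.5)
Your proposal is correct and follows the paper's argument exactly: $C_{d,q}$ is increasing (Proposition~\ref{P:vozrastaet}) and bounded (Theorem~\ref{T:1}), so $C_{d,q}\le t_q$ for every $d$. Evaluating the right-hand side of \eqref{Eq:4} at $d=6,4,3,2,2$ gives $\frac{1387}{486},\frac{1511}{864},\frac{10099}{8000},\frac{5588}{5625},\frac{2153}{2646}$, which check out and lie just below the stated constants. You are also right that only one evaluation is needed, not the minimality of that $d$.
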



\begin{thebibliography}{99}


\bibitem{AG-MK24}
S. Asensio, I. García-Marco, K. Knauer, Sensitivity of $m$-ary functions and low degree partitions of Hamming graphs,
arXiv:2409.16141, September 2024. 

\bibitem{BW02}
H. Buhrman, R. De Wolf, Complexity measures and decision tree complexity: a survey,
Theoretical Computer Science 288 (2002) 21--43.

\bibitem{CHS20}
J. Chiarelli, P. Hatami, M. Saks, An asymptotically tight bound on the number of relevant variables in a bounded degree Boolean function,
Combinatorica 40 (2020) 237--244.

\bibitem{FI19J}
Y. Filmus, F. Ihringer, Boolean constant degree functions on the slice are juntas, Discrete Mathematics 342(12) (2019) 111614.


\bibitem{FI19}
Y. Filmus, F. Ihringer, Boolean degree 1 functions on some classical association schemes, Journal of Combinatorial Theory, Series A 162 (2019) 241--270.


\bibitem{NS94}
N. Nisan, M. Szegedy, On the degree of Boolean functions as real polynomials, Computational Complexity 4 (1994) 301--313.


\bibitem{P25}
V. N. Potapov, On the number of relevant variables for discrete functions, Cryptography and Communications 17 (2025) 989--998.


\bibitem{V26}
A. Valyuzhenich, An upper bound on the number of relevant variables for Boolean functions on the Hamming graph,
Discrete Mathematics 349(2) (2026) 114745.

\bibitem{W22}
J. Wellens, Relationships between the number of inputs and other complexity measures of Boolean functions,
Discrete Analysis 20 (2022).


\end{thebibliography}
\end{document}